\documentclass[11pt]{amsart} 
\usepackage[foot]{amsaddr}
\usepackage{amsmath,amssymb,bm, color}

\usepackage{amsmath}
\usepackage{graphicx} 
\usepackage{float}
\usepackage{amsfonts,amssymb}
\usepackage{dsfont}
\usepackage{pifont}

\numberwithin{equation}{section}
 \newtheorem{theorem}{Theorem}[section]
 \newtheorem{lemma}[theorem]{Lemma}

\def\3bar{{|\hspace{-.02in}|\hspace{-.02in}|}}
\def\E{{\mathcal{E}}}
\def\T{{\mathcal{T}}}

\def\pT{{\partial T}}

\def\bu{{\mathbf{u}}}
\def\bv{{\mathbf{v}}}
\def\bQ{{\mathbf{Q}}}

\def\be{{\mathbf{e}}}

\newtheorem{algorithm}{Algorithm}[section]

 \def\ad#1{\begin{aligned}#1\end{aligned}}  \def\b#1{{\bf #1}} \def\hb#1{\hat{\bf #1}}
\def\a#1{\begin{align*}#1\end{align*}} 
\def\an#1{\begin{align}#1\end{align}}
\def\e#1{\begin{equation}#1\end{equation}} \def\t#1{\hbox{\rm{#1}}}
\def\dt#1{\left|\begin{matrix}#1\end{matrix}\right|}
\def\p#1{\begin{pmatrix}#1\end{pmatrix}} \def\c{\operatorname{curl}}
 \def\vc{\nabla\times } \numberwithin{equation}{section}
 \def\la{\circle*{0.25}}
\def\boxit#1{\vbox{\hrule height1pt \hbox{\vrule width1pt\kern1pt
     #1\kern1pt\vrule width1pt}\hrule height1pt }}
 \def\lab#1{\boxit{\small #1}\label{#1}}
  \def\mref#1{\boxit{\small #1}\ref{#1}}
 \def\meqref#1{\boxit{\small #1}\eqref{#1}}
\long\def\comment#1{}

\title [Least-Squares Weak Galerkin]{A Least-Squares Weak Galerkin Method for Second-Order Elliptic Equations in Non-Divergence Form}

  \author {Chunmei Wang$\dagger$}
  \address{Department of Mathematics, University of Florida, Gainesville, FL 32611, USA. }
  \email{chunmei.wang@ufl.edu}
 
\thanks{$\dagger$ \ Corresponding author. }

\author {Shangyou Zhang}
\address{Department of Mathematical Sciences,  University of Delaware, Newark, DE 19716, USA}   \email{szhang@udel.edu}

\begin{document}

\begin{abstract}This article proposes a novel least-squares weak Galerkin (LS-WG) method for second-order elliptic equations in non-divergence form. The approach leverages a locally defined discrete weak Hessian operator constructed within the weak Galerkin framework. A key feature of the resulting algorithm is that it yields a symmetric and positive definite linear system while remaining applicable to general polygonal and polyhedral meshes. We establish optimal-order error estimates for the approximation in a discrete $H^2$-equivalent norm. Finally, comprehensive numerical experiments are presented to validate the theoretical analysis and demonstrate the efficiency and robustness of the method.
\end{abstract}

\keywords{weak Galerkin, least squares, finite element methods, non-divergence form,
weak Hessian operator, discontinuous coefficients, Cord\`es
condition, polyhedral meshes.}

\subjclass[2010]{65N30, 65N12, 35J15, 35D35}

\maketitle

 \def\ad#1{\begin{aligned}#1\end{aligned}}  \def\b#1{{\bf #1}} \def\hb#1{\hat{\bf #1}}
\def\a#1{\begin{align*}#1\end{align*}} 
\def\an#1{\begin{align}#1\end{align}}
\def\e#1{\begin{equation}#1\end{equation}} \def\t#1{\hbox{\rm{#1}}}
\def\dt#1{\left|\begin{matrix}#1\end{matrix}\right|}
\def\p#1{\begin{pmatrix}#1\end{pmatrix}} \def\c{\operatorname{curl}}
 \def\vc{\nabla\times } \numberwithin{equation}{section}
 \def\la{\circle*{0.25}}
\def\boxit#1{\vbox{\hrule height1pt \hbox{\vrule width1pt\kern1pt
     #1\kern1pt\vrule width1pt}\hrule height1pt }}
 \def\lab#1{\boxit{\small #1}\label{#1}}
  \def\mref#1{\boxit{\small #1}\ref{#1}}
 \def\meqref#1{\boxit{\small #1}\eqref{#1}}
\long\def\comment#1{}

\section{Introduction}
This work presents a novel numerical framework for second-order elliptic equations in non-divergence form. Specifically, we consider the model problem of finding an unknown function $u$ such that:
\begin{equation}\label{model_problem}
\begin{cases}
\sum_{i,j=1}^d a_{ij}\partial^2_{ij}u = f & \text{in } \Omega, \\
u = 0 & \text{on } \partial\Omega,
\end{cases}
\end{equation}
where $\Omega \subset \mathbb{R}^d$ ($d=2,3$) is an open bounded domain with a Lipschitz continuous boundary $\partial\Omega$. The coefficient tensor $a(x) = (a_{ij}(x))_{d \times d}$ is assumed to be symmetric and uniformly elliptic; that is, there exist positive constants $\alpha$ and $\beta$ such that
\begin{equation*}  
\alpha|\xi|^2 \leq \xi^T a(x)\xi \le \beta|\xi|^2, \quad \forall \xi \in \mathbb{R}^d, \, x \in \Omega.
\end{equation*} 
Under these assumptions, and provided $\Omega$ is convex and $a_{ij} \in L^\infty(\Omega)$, problem \eqref{model_problem} admits a unique strong solution $u \in H^2(\Omega) \cap H_0^1(\Omega)$ satisfying the a priori estimate $\|u\|_{2} \leq C \|f\|_0$ \cite{smears}.

Equations in non-divergence form are foundational in stochastic control and probability theory \cite{Fleming}. Furthermore, they frequently arise as linearized sub-problems when solving fully nonlinear partial differential equations (PDEs), such as the Hamilton--Jacobi--Bellman equations, via Newton-type iterations \cite{brenner-0, neilan}. A significant computational challenge in these applications is that the coefficient tensor $a(x)$ is often only essentially bounded or piecewise continuous. Consequently, traditional finite element methods that rely on high global regularity may exhibit suboptimal convergence or fail to converge entirely.
To address these challenges, various specialized numerical schemes for non-divergence form equations have been developed in recent years. These include conforming methods based on finite element Hessians \cite{lakkis, neilan}, primal methods employing interior penalties \cite{feng}, and $hp$-version discontinuous Galerkin (DG) methods of the least-squares type \cite{smears}. 

The Weak Galerkin (WG) finite element method has emerged as a versatile paradigm for the numerical solution of partial differential equations (PDEs). The core innovation of the WG framework lies in the reconstruction of discrete differential operators—such as gradient, divergence, and Hessian—through a distribution-like theory applied to piecewise polynomials. Unlike traditional finite element methods, the WG approach relaxes global regularity requirements by utilizing carefully designed stabilizers to enforce connection across element interfaces. The robustness and flexibility of the WG method have been extensively documented across a wide range of PDE models \cite{wg1, wg2, wg3, wg4, wg5, wg6, wg7, wg8, wg9, wg10, wg11, wg12, wg13, wg14, wg15, wg16, wg17, wg18, wg19, wg20, wg21, itera, wy3655}.

A defining characteristic of WG methods is their reliance on weak derivatives and discrete weak continuities to formulate numerical schemes. This structural flexibility allows the WG framework to handle general meshes, including those with hanging nodes or unconventional element geometries (e.g., polygonal and polyhedral meshes), while maintaining stability and optimal convergence rates.

A significant evolution within this framework is the \textit{Primal-Dual Weak Galerkin (PDWG)} method. This approach was specifically developed to address problems that pose challenges for standard variational or Galerkin-based techniques \cite{pdwg, pdwg1, pdwg2, pdwg3, pdwg4, pdwg5, pdwg6, pdwg7, pdwg8, pdwg9, pdwg10, pdwg11, pdwg12, pdwg13, pdwg14, pdwg15}. The PDWG method reformulates the underlying PDE as a constrained minimization problem, where the constraints are defined by the weak formulation of the differential equation using weak operators. The resulting Euler--Lagrange equations integrate both a primal variable and a dual variable (Lagrange multiplier), naturally yielding a symmetric linear system even for problems that are non-symmetric in their original form.

Although a PDWG  method was recently introduced  for the second order elliptic problem in non-divergence form \eqref{model_problem}  \cite{pdwg}, there remains a distinct need for a framework that inherently produces symmetric positive definite (SPD) systems while maintaining the flexibility to operate on general meshes.
The primary objective of this paper is to introduce a \textit{least-squares weak Galerkin (LS-WG)} method for problem \eqref{model_problem}. This approach offers several technical and computational advantages:
\begin{enumerate}
    \item \textbf{Robustness:} It provides a stable discretization for equations with rough (e.g., $L^\infty$ or piecewise continuous) coefficients.
    \item \textbf{Geometric Flexibility:} The method is applicable to general polygonal and polyhedral meshes, which is a hallmark of the weak Galerkin methodology.
    \item \textbf{Algorithmic Efficiency:} The resulting discrete linear system is symmetric and positive definite, enabling the use of highly efficient iterative solvers, such as the preconditioned conjugate gradient method.
\end{enumerate}

We establish optimal-order error estimates in a discrete $H^2$-equivalent norm. Furthermore, we provide a series of numerical experiments   to validate the theoretical analysis and demonstrate the method's practical performance.

The remainder of this paper is organized as follows. In Section 2, we review the construction of the discrete weak Hessian operator. Section 3 details the formulation of the LS-WG finite element method. In Section 4, we derive the optimal-order error estimates. Finally, numerical results are presented in Section 5 to illustrate the efficiency of the proposed scheme.
 
Throughout this paper, $D \subset \mathbb{R}^d$ denotes an open bounded domain with a Lipschitz continuous boundary. We employ standard notation for Sobolev spaces $H^s(D)$ with associated inner products $(\cdot, \cdot)_{s,D}$, norms $\|\cdot\|_{s,D}$, and seminorms $|\cdot|_{s,D}$ for $s \ge 0$ \cite{ciarlet-fem}. The $L^2$ inner product on the boundary $\partial D$ is denoted by $\langle \cdot, \cdot \rangle_{\partial D}$. For simplicity, subscripts are omitted when $D = \Omega$.
\section{Weak Hessian and Discrete Weak Hessian}

For classical functions, the Hessian is the square matrix of second-order partial derivatives. In the context of second-order elliptic problems in non-divergence form (\ref{model_problem}), the Hessian serves as the primary differential operator. Consequently, it is essential to develop numerical methods specifically tailored to the Hessian operator. This section reviews the discrete weak Hessian operator originally introduced in \cite{ww}.

Let $T$ be a polygonal or polyhedral domain with boundary $\partial T$. We define a  {weak function} on $T$ as a triplet $v=\{v_0, v_b, \mathbf{v}_g\}$, where $v_0 \in L^2(T)$, $v_b \in L^2(\partial T)$, and $\mathbf{v}_g \in [L^2(\partial T)]^d$. In this framework, the components $v_0$ and $v_b$ represent the values of $v$ in the interior and on the boundary of $T$, respectively. The third component, $\mathbf{v}_g = (v_{g1}, \dots, v_{gd})$, is intended to represent the gradient $\nabla v$ on $\partial T$. 

Note that $v_b$ and $\mathbf{v}_g$ are not required to be the traces of $v_0$ and $\nabla v_0$ on $\partial T$. If $v_0$ possesses sufficient regularity such that these traces exist and are used for $v_b$ and $\mathbf{v}_g$, the weak function $v$ is uniquely determined by $v_0$ and reduces to a classical function. One may also choose $v_b$ as the trace of $v_0$ while treating $\mathbf{v}_g$ as an independent variable, or vice versa. We denote the space of all weak functions on $T$ by:
\begin{equation*}\label{2.1}
W(T) = \{v = \{v_0, v_b, \mathbf{v}_g\} : v_0 \in L^2(T), v_b \in L^2(\partial T), \mathbf{v}_g \in [L^2(\partial T)]^d\}.
\end{equation*}

For any $v \in W(T)$, the  { weak second-order partial derivative}, denoted by $\partial^2_{ij,w} v$ ($i,j=1, \cdots, d$), is defined as a bounded linear functional on the Sobolev space $H^2(T)$. Its action on a test function $\varphi \in H^2(T)$ is defined by:
\begin{equation*}\label{2.3}
\langle \partial^2_{ij,w}v, \varphi \rangle_T := (v_0, \partial^2_{ji}\varphi)_T - \langle v_b n_i, \partial_j\varphi \rangle_{\partial T} + \langle v_{gi}, \varphi n_j \rangle_{\partial T},
\end{equation*}
where $\mathbf{n} = (n_1, \dots, n_d)$ denotes the unit outward normal vector on $\partial T$. The  {weak Hessian} of $v \in W(T)$ is then defined as the matrix:
\[
\nabla^2_{w,T} v = \left[ \partial_{ij,w}^2 v \right]_{d \times d}.
\]

To facilitate numerical implementation, we define a discrete version of the operator. Let $P_r(T)$ be the space of polynomials of total degree $r$ or less on $T$. The  {discrete weak second-order partial derivative}, denoted by $\partial^2_{ij,w,r,T}$, is the unique polynomial in $P_r(T)$ satisfying:
\begin{equation}\label{2.4}
(\partial^2_{ij,w,r,T} v, \varphi)_T = (v_0, \partial^2_{ji}\varphi)_T - \langle v_b n_i, \partial_j\varphi \rangle_{\partial T} + \langle v_{gi}, \varphi n_j \rangle_{\partial T},
\end{equation}
for all $\varphi \in P_r(T)$.
The {discrete weak Hessian} $\nabla^2_{w,r,T} v$ is defined analogously as the matrix of these discrete partial derivatives:
\[
\nabla^2_{w,r,T} v = \left[ \partial_{ij,w,r,T}^2 v \right]_{d \times d}.
\]

If $v \in W(T)$ has a smooth interior component $v_0 \in H^2(T)$, applying standard integration by parts to the first term on the right-hand side of (\ref{2.4}) yields the following identity for all $\varphi \in P_r(T)$:
\begin{equation}\label{2.4new}
(\partial^2_{ij,w,r,T} v, \varphi)_T = (\partial^2_{ij} v_0, \varphi)_T - \langle (v_b - v_0) n_i, \partial_j \varphi \rangle_{\partial T} + \langle v_{gi} - \partial_i v_0, \varphi n_j \rangle_{\partial T}.
\end{equation}
 
\section{Least Squares Weak Galerkin}\label{Section:PD-WGFEM}
Let ${\mathcal  T}_h$ be a finite element partition of the domain
$\Omega$ into polygons in 2D or polyhedra in 3D. Denote by
${\mathcal E}_h$ the set of all edges or flat faces in ${\mathcal  T}_h$
and ${\mathcal E}_h^0={\mathcal E}_h \setminus \partial\Omega$ the
set of all interior edges or flat faces. Assume that ${\mathcal  T}_h$
satisfies the shape regularity conditions described as in
\cite{wy3655}. Denote by $h_T$ the diameter of $T\in {\mathcal  T}_h$ and
$h=\max_{T\in {\mathcal  T}_h}h_T$ the meshsize of the partition ${\mathcal 
T}_h$.  

For any given integer $k\geq 2$, let $W_k(T)\subset W(T)$ be a
subspace consisting of   polynomials in the following form
\begin{equation*}\label{EQ:local-weak-fem-space}
W_k(T):=\{v=\{v_0,v_b,\bv_g\}\in P_k(T)\times P_k(e)\times
[P_{k-1}(e)]^d,\ e\in \partial T\cap\E_h\}.
\end{equation*}
By patching $W_k(T)$ over all $T\in {\mathcal  T}_h$ through a common
value on the interior interface $\E_h^0$ for $v_b$, we arrive at
the following weak finite element space
$$
W_{h}:=\big\{\{v_0,v_b, \textbf{v}_g\}:\ \{v_0,v_b, \bv_g\}|_T\in
W_k(T), \ T\in {\mathcal  T}_h\big\}.
$$
Denote by $W_{h}^0$ the subspace of $W_{h}$ with vanishing
boundary value for $v_b$ on $\partial\Omega$:
\begin{equation*} 
W_{h}^0=\{\{v_0,v_b, \bv_g\}\in W_{h},\ v_b|_e=0, e\subset
\partial\Omega\}.
\end{equation*}

For simplicity of notation, we denote by $\partial^2_{ij, w}$ the
discrete weak second order partial differential operator defined by
(\ref{2.4}); i.e.,
$$
(\partial^2_{ij, w} v)|_T=\partial^2_{ij,w,r,T}(v|_T), \qquad \forall v\in
W_{h}.
$$
 We define the bilinear form
 $$
 a(u, v)=\sum_{T\in {\mathcal  T}_h}   (\sum_{i,j=1}^d a_{ij}\partial_{ij,w}^2
u, \sum_{i,j=1}^d a_{ij}\partial_{ij,w}^2
v)_T,
 $$
 and the stabilizer 
 $$s(u, v)=\sum_{T\in {\mathcal  T}_h} h_T^{-3} \langle u_0-u_b,
v_0-v_b\rangle_\pT+ h_T^{-1}  \langle \nabla u_0 -\bu_g, \nabla v_0 -\bv_g \rangle_\pT. $$

\begin{algorithm}\emph{(Least Squares Weak Galerkin FEM)}
\label{ALG:primal-dual-wg-fem} For a numerical approximation of the
second order elliptic problem    in the non-divergence form (\ref{model_problem}),
find $ u_h \in W_{h}^0$ satisfying
 \begin{equation}\label{fem}
a(u_h, v)  +s(u_h, v)= \sum_{T\in {\mathcal  T}_h} (f, \sum_{i,j=1}^d  a_{ij}\partial_{ij,w}^2
v)_T,  
\end{equation}
for any $v\in W_{h}^0$.
 
\end{algorithm}

\begin{theorem}\label{theorem1}  
 Assume that the problem (\ref{model_problem}) has a
unique strong solution. The least-square weak Galerkin scheme \eqref{fem} has a unique solution.
\end{theorem}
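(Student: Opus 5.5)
Since \eqref{fem} is a square linear system on the finite-dimensional space $W_h^0$, it suffices to show that the homogeneous problem has only the trivial solution. So we assume $f=0$, let $u_h\in W_h^0$ solve \eqref{fem}, and take $v=u_h$. This gives
\[
\sum_{T\in\T_h}\Big\|\sum_{i,j=1}^d a_{ij}\partial^2_{ij,w}u_h\Big\|_T^2+\sum_{T\in\T_h}\Big(h_T^{-3}\|u_0-u_b\|_{\pT}^2+h_T^{-1}\|\nabla u_0-\bu_g\|_{\pT}^2\Big)=0 ,
\]
so each term vanishes separately. The vanishing stabilizer gives $u_b=u_0|_{\pT}$ and $\bu_g=\nabla u_0|_{\pT}$ on every element.

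We first show that $u_0$ is globally $C^1$. The value $u_b$ is single-valued on interior faces, so the traces of $u_0$ from the two sides agree; hence $u_0\in H^1(\Omega)$, and $u_b=0$ on $\partial\Omega$ gives $u_0\in H_0^1(\Omega)$. For the gradient we rely on the convention that $\bv_g$ is single-valued on each face, which is implicit in ``patching'' (the WG Hessian setting of \cite{ww,pdwg} shares $\bv_g$ on interior faces). Then $\nabla u_0$ is continuous across faces, so $u_0\in H^2(\Omega)\cap H_0^1(\Omega)$. If instead $\bv_g$ were element-wise, we would only get continuity of $u_0$, and we would have to argue differently. I expect this to be the delicate point, and I would adopt the shared-$\bv_g$ reading.

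Next, with $u_b=u_0$ and $\bu_g=\nabla u_0$ on $\pT$, identity \eqref{2.4new} shows that $\partial^2_{ij,w}u_h=Q_r\partial^2_{ij}u_0=\partial^2_{ij}u_0$ on each $T$. The last equality holds because $\partial^2_{ij}u_0\in P_{k-2}(T)\subset P_r(T)$, assuming $r\ge k-2$. Therefore $\sum_{i,j}a_{ij}\partial^2_{ij}u_0=0$ a.e. in $\Omega$, so $u_0\in H^2\cap H_0^1$ is a strong solution of \eqref{model_problem} with $f=0$. By the assumed uniqueness of strong solutions, $u_0=0$.

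Finally, $u_0=0$ forces $u_b=0$ and $\bu_g=0$, so $u_h=0$. This proves uniqueness, and existence follows from finite dimensionality. The same computation also shows that $a(\cdot,\cdot)+s(\cdot,\cdot)$ is an inner product on $W_h^0$, so the system is SPD.
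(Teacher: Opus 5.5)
Your proof is correct and follows the paper's argument essentially step for step. You test with $v=u_h$, read off $u_b=u_0$ and $\bu_g=\nabla u_0$ on each $\partial T$, deduce that $u_0$ is $C^1$ (hence in $H^2\cap H_0^1$), use \eqref{2.4new} to identify $\partial^2_{ij,w}u_h$ with $\partial^2_{ij}u_0$, and invoke uniqueness of the strong solution.

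The point you flagged is real. The paper's step ``$\nabla u_0$ is continuous'' silently requires $\bv_g$ to be single-valued on interior faces, even though its definition of $W_h$ only mentions patching $v_b$. That convention is genuinely necessary: with element-wise $\bv_g$, a continuous piecewise linear hat function on a triangular mesh, taken with $u_b=u_0|_{\partial T}$ and $\bu_g=\nabla u_0|_T$, would be a nonzero element of the kernel.
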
 
\begin{proof}
 It suffices to show that the solution of \eqref{fem} is trivial if $f=0$. Assume that $f=0$ and let $v=u_h$ in \eqref{fem}, we have 
 $$ 
\sum_{T\in {\mathcal  T}_h}   (\sum_{i,j=1}^d a_{ij}\partial_{ij,w}^2
u_h, \sum_{i,j=1}^d a_{ij}\partial_{ij,w}^2
u_h)_T +s(u_h, u_h)=0,   
 $$
 which gives $\sum_{i,j=1}^d a_{ij}\partial_{ij,w}^2
u_h=0$ on each element $T$, $u_0=u_b$   and $\nabla u_0=\bu_g$ on each $\pT$.

 Using $u_0=u_b$  and $\nabla u_0=\bu_g$ on each $\pT$ gives that $u_0$ and $\nabla u_0$ is continuous in the domain $\Omega$ and thus $u_0\in C^1(\Omega)$.

It follows from  \eqref{2.4new}, $u_0=u_b$ and $\nabla u_0=\bu_g$ on each $\pT$ that
 \begin{equation*} 
 (\partial^2_{ij,w}u,\varphi)_T=(\partial^2
 _{ij}u_0,\varphi)_T,
 \end{equation*}
 for all $\varphi \in P_{k-2}(T)$. This leads to $ \partial^2_{ij,w}u=\partial^2
 _{ij}u_0 $  on each element $T$. Thus, the fact $\sum_{i,j=1}^d a_{ij}\partial_{ij,w}^2
u_h=0$  on each element $T$ implies $\sum_{i,j=1}^d a_{ij}\partial_{ij}^2 
u_0=0$  on each element $T$. Recall that $u_0\in C^1(\Omega)$, we   further have  $\sum_{i,j=1}^d a_{ij}\partial_{ij}^2 
u_0=0$  in $\Omega$.  It follows from $u_0=u_b$ on each $\pT$ and $u_b=0$ on $\partial \Omega$ gives $u_0=0$ on $\partial \Omega$. From $\sum_{i,j=1}^d a_{ij}\partial_{ij}^2 
u_0=0$ in $\Omega$ and $u_0=0$ on $\partial \Omega$, we conclude that $u_0 \equiv 0$ in $\Omega$ based on we assume that the problem (\ref{model_problem}) has a
unique strong solution. Using $u_0=u_b$ on each $\pT$ and $\nabla u_0=\bu_g$ on each $\pT$ gives $u_b\equiv 0$ and $\bu_g \equiv 0$ in $\Omega$. This gives $u_h \equiv 0$ in the domain $\Omega$.

This completes the proof of the theorem. 
\end{proof} 

We now define a semi-norm on $W_h$ as follows:
$$
\3bar v\3bar=(a(v,v)+s(v,v))^{\frac{1}{2}}.
$$
Similar to the proof of Theorem \ref{theorem1}, we can prove that $\3bar \cdot\3bar$ defines a norm on $W_h^0$.

\section{Error Estimates}
We begin by recalling the fundamental trace inequalities necessary for our analysis. Let ${\mathcal  T}_h$ be a shape-regular finite element partition of the domain $\Omega$. For any element $T \in {\mathcal  T}_h$ and function $\phi \in H^1(T)$, the following trace inequality holds \cite{wy3655}:
\begin{equation}\label{tracein}
\|\phi\|^2_{\partial T} \leq C \left( h_T^{-1} \|\phi\|_T^2 + h_T \|\nabla \phi\|_T^2 \right).
\end{equation}
When $\phi$ is a polynomial, this bound simplifies to the discrete trace inequality \cite{wy3655}:
\begin{equation}\label{trace}
\|\phi\|^2_{\partial T} \leq C h_T^{-1} \|\phi\|_T^2.
\end{equation}

For each element $T$, let $Q_0$ denote the $L^2$ projection onto the polynomial space $P_k(T)$ with $k \geq 2$. For each edge or face $e \subset \partial T$, let $Q_b$ and $\mathbf{Q}_g = (Q_{g1}, Q_{g2}, \dots, Q_{gd})$ represent the $L^2$ projections onto $P_k(e)$ and $[P_{k-1}(e)]^d$, respectively. 

For any function $w \in H^2(\Omega)$, we define $Q_h w$ as the $L^2$ projection onto the weak finite element space $W_k(T)$ such that on each element $T$:
\begin{equation*}
Q_h w = \{Q_0 w, Q_b w, \mathbf{Q}_g(\nabla w)\}.
\end{equation*}
Furthermore, let ${\mathcal  Q}_h$ denote the $L^2$ projection onto the space $P_{k-2}(T)$.

\begin{lemma}\label{Lemma5.2}\cite{wy3655}  Let ${\mathcal  T}_h$ be a
finite element partition of $\Omega$ satisfying the shape regularity
assumptions given in \cite{wy3655}. Then, for any $0\leq s\leq 1$
and $1\leq m\leq k$, one has
\begin{eqnarray}\label{3.1}
\sum_{T\in {\mathcal  T}_h}h_T^{2s}\|u-Q_0u\|^2_{s,T} &\leq &
Ch^{2(m+1)}\|u\|_{m+1}^2,\\
\label{3.3} \sum_{T\in {\mathcal 
T}_h}\sum_{i,j=1}^dh_T^{2s}\|\partial^2_{ij}u-{\mathcal 
Q}_h\partial^2_{ij}u\|^2_{s,T} &\leq& Ch^{2(m-1)}\|u\|_{m+1}^2.
\end{eqnarray}
\end{lemma}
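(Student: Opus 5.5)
This is the standard projection estimate, and I would prove it elementwise from the approximation properties of $L^2$ projections on shape-regular polytopes. The argument works on one element $T$ and sums at the end. On each $T$ I fix a ball $B_T$ with respect to which $T$ is star-shaped and whose radius is comparable to $h_T$; the shape regularity assumptions of \cite{wy3655} provide this.

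\textbf{Step 1: local polynomial approximation.} On $B_T$ I take the averaged Taylor polynomial $p_T\in P_k(T)$ of degree $m\le k$ of $u$. The Bramble--Hilbert lemma, applied in the Dupont--Scott form on star-shaped domains, gives
\[
|u-p_T|_{t,T}\le C h_T^{m+1-t}|u|_{m+1,T},\qquad 0\le t\le m+1.
\]
The constant $C$ depends only on the chunkiness parameter, and hence only on the shape regularity of the mesh.

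\textbf{Step 2: the case $s=0$ of \eqref{3.1}.} Because $Q_0$ is the $L^2(T)$ projection onto $P_k(T)\ni p_T$, it is the best $L^2$ approximation, so
\[
\|u-Q_0u\|_T\le \|u-p_T\|_T\le Ch_T^{m+1}|u|_{m+1,T}.
\]

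\textbf{Step 3: the case $s=1$ of \eqref{3.1}.} I would split $u-Q_0u=(u-p_T)+Q_0(p_T-u)$. The first piece is handled by Step 1. For the second piece, which is a polynomial, I would use the inverse inequality $|\phi|_{1,T}\le Ch_T^{-1}\|\phi\|_T$ for $\phi\in P_k(T)$, together with the $L^2$-stability of $Q_0$. This gives
\[
h_T|Q_0(p_T-u)|_{1,T}\le C\|p_T-u\|_T\le Ch_T^{m+1}|u|_{m+1,T}.
\]
The intermediate values $0<s<1$ follow by interpolating between the cases $s=0$ and $s=1$, or simply by working with integer $s$ as the paper does later.

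\textbf{Step 4: summation.} Summing over $T\in\mathcal T_h$ and using $h_T\le h$ yields \eqref{3.1}.

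\textbf{Step 5: the estimate \eqref{3.3}.} I would apply the same argument to $w=\partial^2_{ij}u\in H^{m-1}(T)$, with $\mathcal Q_h$ projecting onto $P_{k-2}(T)$. Since $m-1\le k-1$, I take the averaged Taylor polynomial of degree $m-2\le k-2$. This requires $m\ge2$. For $m=1$ the bound simply reads $\|w-\mathcal Q_hw\|_T\le\|w\|_T$, together with the $H^1$-stability of $\mathcal Q_h$ in the $h_T$-weighted sense, which again comes from the inverse inequality.

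The result is
\[
h_T^{s}\|w-\mathcal Q_hw\|_{s,T}\le Ch_T^{m-1}\|w\|_{m-1,T}\le Ch_T^{m-1}\|u\|_{m+1,T}.
\]
Summing over $T$ and over $i,j$ gives \eqref{3.3}.

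\textbf{Main obstacle.} The only non-routine point is making the Bramble--Hilbert and inverse-inequality constants uniform on general polytopes. Here I would invoke the shape regularity assumptions of \cite{wy3655}, which are designed precisely to supply this. On a genuinely polytopal element, the inverse inequality is obtained by covering $T$ with a bounded number of shape-regular simplices.
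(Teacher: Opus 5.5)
\textbf{The gap: the case $m=1$, $0<s\le 1$ of \eqref{3.3}.} Your Steps 1--4, and the $m\ge 2$ part of Step 5, are the standard argument and are fine on elements that are star-shaped with respect to a ball (but see the next paragraph). What does not work is your treatment of $m=1$, $0<s\le 1$.

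There $w=\partial^2_{ij}u$ is only in $L^2(T)$. The inverse inequality gives $h_T\|\mathcal{Q}_h w\|_{1,T}\le C\|w\|_T$, but that controls only the polynomial part. The quantity $h_T\|w-\mathcal{Q}_h w\|_{1,T}$ still contains $h_T|w|_{1,T}$, which no $L^2$ bound on $w$ can control.

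No repair is possible, because this case of the lemma as printed is false. Take a smooth cutoff of $|x_1-c|^{3/2+\epsilon}$ with $0<\epsilon<1$, where the line $x_1=c$ crosses some element $T$. This function lies in $H^2(\Omega)$, but its second derivative $\partial^2_{11}u\notin H^1(T)$. So for $m=1$, $s=1$ the left side of \eqref{3.3} is infinite while the right side is finite. Approximating by smooth functions shows that no uniform constant exists even for smooth $u$.

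The correct range for \eqref{3.3} is $0\le s\le \min(1,m-1)$, since $w$ must lie in $H^s$. You should state this restriction rather than claim the excluded case. The paper is unaffected: \eqref{3.3} is only invoked with $s=0$, with $m=1$ in \eqref{ee1} and with $m=k$ in the final theorem.

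\textbf{The geometric input.} The paper gives no proof of this lemma and only cites \cite{wy3655}. The shape-regularity assumptions there do not supply a ball of radius comparable to $h_T$ with respect to which $T$ is star-shaped. In essence they are:
\begin{itemize}
\item lower bounds on $|T|$ and $|e|$, and $h_e\gtrsim h_T$;
\item a shape-regular circumscribed simplex $S(T)$ with $h_{S(T)}\lesssim h_T$, such that the family $\{S(T)\}$ has bounded overlap;
\item an interior pyramid of height comparable to $h_T$ over each face.
\end{itemize}
A fat U-shaped octagon satisfies all of these, yet it is star-shaped with respect to no point. So the ball in your Step 1 is an additional hypothesis, not a consequence of \cite{wy3655}. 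The same goes for your covering of $T$ by shape-regular simplices for the inverse inequality.

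The argument adapted to those assumptions goes through $S(T)$:
\begin{enumerate}
\item Extend $u$ to $\tilde u\in H^{m+1}(\mathbb{R}^d)$.
\item Approximate $\tilde u$ on the simplex $S(T)$ using the classical estimate.
\item Restrict to $T$, where $Q_0$ is the best $L^2$ approximation.
\item Obtain the inverse inequality on $T$ from the one on $S(T)$ together with $|T|\gtrsim |S(T)|$.
\item Sum over elements using the bounded overlap of the $S(T)$ and the continuity of the extension.
\end{enumerate}
Your Dupont--Scott route avoids the extension operator and the overlap bookkeeping, and it gives the sharper local bound in terms of $|u|_{m+1,T}$. However, it holds only under the extra star-shapedness hypothesis. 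That hypothesis is satisfied by the meshes used in the numerical section.
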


\begin{lemma}\label{Lemma5.1} \cite{pdwg} The projection operators $Q_h$ and
${\mathcal  Q}_h$ satisfy the following commutative property:
\begin{equation}\label{EQ:CommutativeP}
\partial^2_{ij,w}(Q_h w)={\mathcal  Q}_h(\partial^2_{ij} w),\qquad
i,j=1,\ldots,d,
\end{equation}
for all $w\in H^2(T)$.
\end{lemma}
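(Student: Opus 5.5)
The plan is to verify the identity directly from the defining relation \eqref{2.4}, taking $r=k-2$. This is the polynomial degree of the discrete weak Hessian used throughout, as in the proof of Theorem \ref{theorem1}. Both sides of \eqref{EQ:CommutativeP} then lie in $P_{k-2}(T)$. It therefore suffices to show that for every $\varphi\in P_{k-2}(T)$ one has
$$
(\partial^2_{ij,w}(Q_h w),\varphi)_T=(\partial^2_{ij} w,\varphi)_T ,
$$
because the right-hand side equals $({\mathcal Q}_h\partial^2_{ij}w,\varphi)_T$ by the definition of ${\mathcal Q}_h$.

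\textbf{Step 1: remove the projections.} Apply \eqref{2.4} with $v=Q_hw=\{Q_0w,Q_bw,\mathbf{Q}_g(\nabla w)\}$. This gives
$$
(Q_0w,\partial^2_{ji}\varphi)_T-\langle Q_bw\, n_i,\partial_j\varphi\rangle_{\partial T}+\langle Q_{gi}(\partial_i w),\varphi n_j\rangle_{\partial T}.
$$
Each projection can be dropped by a degree count.
\begin{itemize}
\item Since $\partial^2_{ji}\varphi\in P_{k-4}(T)\subset P_k(T)$, the definition of $Q_0$ lets us replace $Q_0w$ by $w$.
\item On each flat face $e\subset\partial T$, the normal component $n_i$ is constant. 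Hence $n_i\partial_j\varphi|_e\in P_{k-3}(e)\subset P_k(e)$, and $Q_bw$ can be replaced by $w$.
\item Likewise $\varphi n_j|_e\in P_{k-2}(e)\subset P_{k-1}(e)$, so $Q_{gi}(\partial_iw)$ can be replaced by $\partial_iw$.
\end{itemize}

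\textbf{Step 2: integrate by parts twice.} After Step 1 the expression reads
$$
(w,\partial^2_{ji}\varphi)_T-\langle w n_i,\partial_j\varphi\rangle_{\partial T}+\langle\partial_iw,\varphi n_j\rangle_{\partial T}.
$$
This is legitimate for $w\in H^2(T)$, since the traces of $w$ and $\nabla w$ on $\partial T$ are well defined. Two integrations by parts give
$$
(w,\partial_i\partial_j\varphi)_T=-(\partial_iw,\partial_j\varphi)_T+\langle w,\partial_j\varphi\, n_i\rangle_{\partial T},
$$
$$
-(\partial_iw,\partial_j\varphi)_T=(\partial_j\partial_iw,\varphi)_T-\langle\partial_iw,\varphi n_j\rangle_{\partial T}.
$$
Substituting, both boundary terms cancel exactly. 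What remains is $(\partial^2_{ij}w,\varphi)_T$, using the symmetry of second derivatives for $H^2$ functions. This is the same computation that underlies \eqref{2.4new}, now specialized to $v_b=w$ and $\mathbf{v}_g=\nabla w$.

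\textbf{Main obstacle.} The only delicate point is the degree bookkeeping in Step 1. Removing $Q_b$ and $\mathbf{Q}_g$ requires the test traces $n_i\partial_j\varphi$ and $n_j\varphi$ to be polynomials of degree at most $k$ and $k-1$ on each face. This holds because the faces are flat, so $\mathbf{n}$ is piecewise constant on $\partial T$, and because $r=k-2$. A larger $r$, say $r=k$, would break the argument: $\varphi n_j$ would then have degree $k>k-1$, and the $\mathbf{Q}_g$ term could not be removed.
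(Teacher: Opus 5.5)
Your proof is correct. It is the standard argument behind this lemma, which the paper only cites from \cite{pdwg} without proof: test against $\varphi\in P_{k-2}(T)$, remove $Q_0$, $Q_b$ and $\mathbf{Q}_g$ by degree counting on flat faces, then integrate by parts twice so the boundary terms cancel. Your observation that $r=k-2$ is essential, since it is what lets you drop the $\mathbf{Q}_g$ term, is also correct.
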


\begin{theorem}
   Let $u$ be the exact solution to the second-order elliptic problem in non-divergence form \eqref{model_problem}, and let $u_h \in W_h^0$ be the numerical solution to the least-squares Weak Galerkin scheme \eqref{fem}. We define the error function $e_h$ as  
   $$e_h=Q_hu-u_h.$$ 
    There exists a constant $C$ such that 
    \begin{equation}\label{est1}
        \3bar Q_hu-u_h\3bar \leq Ch^{k-1} \|u\|_{k+1}.
    \end{equation}
\end{theorem}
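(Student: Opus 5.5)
The proof is a standard least-squares error analysis. We derive an error equation for $e_h$ and bound its right-hand side by the projection estimates of Lemma \ref{Lemma5.2}. The discrete Hessian degree is $r=k-2$, which matches the use of $\mathcal Q_h$.

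\textbf{Error equation.} We compute $a(Q_hu,v)+s(Q_hu,v)$ for an arbitrary $v\in W_h^0$. By the commutative property (Lemma \ref{Lemma5.1}), $\partial^2_{ij,w}(Q_hu)=\mathcal Q_h\partial^2_{ij}u$. Hence
\[
\sum_{i,j} a_{ij}\partial^2_{ij,w}Q_hu = f + \sum_{i,j} a_{ij}\bigl(\mathcal Q_h\partial^2_{ij}u-\partial^2_{ij}u\bigr),
\]
using $\sum_{ij}a_{ij}\partial^2_{ij}u=f$. Subtracting the scheme \eqref{fem} gives
\[
a(e_h,v)+s(e_h,v)=\sum_{T}\Bigl(\sum_{i,j}a_{ij}(\mathcal Q_h\partial^2_{ij}u-\partial^2_{ij}u),\ \sum_{i,j}a_{ij}\partial^2_{ij,w}v\Bigr)_T+s(Q_hu,v).
\]

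\textbf{Bounding the terms.} We take $v=e_h$ and treat the two terms on the right separately.

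The first term is bounded by Cauchy--Schwarz, the boundedness $a_{ij}\in L^\infty$, and \eqref{3.3} with $s=0$, $m=k$. This gives $Ch^{k-1}\|u\|_{k+1}\,a(e_h,e_h)^{1/2}$.

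For the stabilizer term we write $s(Q_hu,e_h)\le s(Q_hu,Q_hu)^{1/2}s(e_h,e_h)^{1/2}$ and estimate $s(Q_hu,Q_hu)$ piece by piece.

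For the first piece, on each $e\subset\partial T$ we have $Q_0u-Q_bu=Q_b(Q_0u-u)$. Since $Q_b$ is an $L^2$ projection, $\|Q_0u-Q_bu\|_e\le\|Q_0u-u\|_e$. The trace inequality \eqref{tracein} then gives
\[
h_T^{-3}\|Q_0u-u\|^2_{\partial T}\le C\bigl(h_T^{-4}\|u-Q_0u\|_T^2+h_T^{-2}\|\nabla(u-Q_0u)\|_T^2\bigr).
\]
Summing over $T$ and applying \eqref{3.1} with $s=0,1$ and $m=k$ gives $Ch^{2(k-1)}\|u\|_{k+1}^2$.

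For the second piece, we split
\[
\nabla Q_0u-\mathbf Q_g\nabla u=\nabla(Q_0u-u)+(\nabla u-\mathbf Q_g\nabla u).
\]
The first part is handled by \eqref{tracein} and \eqref{3.1} at the derivative level. This needs $h_T^2\|u-Q_0u\|^2_{2,T}$, a standard extension of \eqref{3.1} to $s=2$ that I will assume from \cite{wy3655}. For the second part, we use that $\mathbf Q_g$ is the best approximation in $P_{k-1}(e)$. So $\|\nabla u-\mathbf Q_g\nabla u\|_e\le\|\nabla u-\nabla Q_0u\|_e$, since $\nabla Q_0u|_e\in P_{k-1}(e)$, and this reduces to the first part. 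Both parts then give $Ch^{2(k-1)}\|u\|_{k+1}^2$.

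\textbf{Conclusion and main obstacle.} Combining the bounds,
\[
\3bar e_h\3bar^2\le Ch^{k-1}\|u\|_{k+1}\bigl(a(e_h,e_h)^{1/2}+s(e_h,e_h)^{1/2}\bigr)\le Ch^{k-1}\|u\|_{k+1}\3bar e_h\3bar,
\]
which yields \eqref{est1}.

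The main obstacle is the gradient part of the stabilizer, which needs $H^2$-level approximation of $Q_0$ on the boundary. If the extension of \eqref{3.1} to $s=2$ is not available, I would instead insert a local interpolant $\Pi_k u\in P_k(T)$ and use the inverse inequality on $Q_0u-\Pi_ku$. A secondary point is checking that the commutative property of Lemma \ref{Lemma5.1} indeed holds with $r=k-2$.
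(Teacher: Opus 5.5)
Your proof is correct. It reaches the same error identity as the paper,
\[
\3bar e_h\3bar^2=\sum_{T\in\T_h}\Big(\sum_{i,j=1}^d a_{ij}(\mathcal{Q}_h-I)\partial^2_{ij}u,\ \sum_{i,j=1}^d a_{ij}\partial^2_{ij,w}e_h\Big)_T+s(Q_hu,e_h),
\]
but it bounds the consistency term by a genuinely different argument.

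You obtain the identity in one line from $\sum a_{ij}\partial^2_{ij}u=f$ and Lemma~\ref{Lemma5.1}. The paper gets there only after a detour through $a_{ij}^2$ and the element averages $\overline{a_{ij}^2}$. Some of its intermediate equalities are not identities in general, for instance trading $(\sum a_{ij}X_{ij},\sum a_{ij}Y_{ij})_T$ for $(\sum a_{ij}^2X_{ij},\sum Y_{ij})_T$.

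The paper then rewrites the first term with the factor $a_{ij}^2-\overline{a_{ij}^2}$. It invokes the uniform piecewise continuity of $a_{ij}$ and applies \eqref{3.3} with $m=1$, arriving at $C\varepsilon\|u\|_2\3bar e_h\3bar$. That type of bound suits rough coefficients and minimal regularity. On its own, however, it does not produce the factor $h^{k-1}\|u\|_{k+1}$ claimed in \eqref{est1}.

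Your route uses only $a_{ij}\in L^\infty$ and \eqref{3.3} with $m=k$. This gives $Ch^{k-1}\|u\|_{k+1}\,a(e_h,e_h)^{1/2}$ directly, which is what actually delivers the stated rate, and it needs no continuity of the coefficients.

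Your stabilizer estimate is the paper's. You are right that its gradient piece needs $|u-Q_0u|_{2,T}\le Ch_T^{k-1}|u|_{k+1,T}$. The paper uses this while citing only $s=0,1$ in \eqref{3.1}. In the weighting of \eqref{3.1} this corresponds to $h_T^{4}\|u-Q_0u\|^2_{2,T}$, not $h_T^{2}\|u-Q_0u\|^2_{2,T}$ as you wrote. Your interpolant-plus-inverse-inequality fallback is the standard way to prove it.

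Both you and the paper also tacitly use the element-wise form of \eqref{3.1} when absorbing $h_T^{-4}$ and $h_T^{-2}$.

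Your secondary concern is harmless. For $\varphi\in P_{k-2}(T)$ one has $\partial^2_{ji}\varphi\in P_k(T)$, $\partial_j\varphi|_e\in P_k(e)$ and $\varphi n_j|_e\in P_{k-1}(e)$. Hence all three projections in $Q_hw$ drop out of \eqref{2.4}, and integrating by parts gives Lemma~\ref{Lemma5.1} with $r=k-2$.
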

\begin{proof}
    Testing the first equation in \eqref{model_problem} by $ \sum_{i,j=1}^d a_{ij}\partial_{ij,w}^2 v$ implies
    $$
    \sum_{T\in {\mathcal  T}_h}(\sum_{i,j=1}^d a_{ij}\partial_{ij}^2 
u, \sum_{i,j=1}^d a_{ij}\partial_{ij,w}^2 v)_T=\sum_{T\in {\mathcal  T}_h}(f, \sum_{i,j=1}^d a_{ij}\partial_{ij,w}^2 v)_T.
    $$

    Using \eqref{EQ:CommutativeP},  we have
    \begin{equation*}\label{s1}
    \begin{split}
  &\sum_{T\in {\mathcal  T}_h} (\sum_{i,j=1}^d a_{ij}\partial_{ij}^2 
u, \sum_{i,j=1}^d a_{ij}\partial_{ij,w}^2 v)_T\\ =&\sum_{T\in {\mathcal  T}_h} (\sum_{i,j=1}^d  a_{ij}^2  \partial_{ij}^2 
u, \sum_{i,j=1}^d  \partial_{ij,w}^2 v)_T  \\
=& \sum_{T\in {\mathcal  T}_h}(\sum_{i,j=1}^d  {\mathcal  Q}_h (a_{ij}^2  \partial_{ij}^2 
u), \sum_{i,j=1}^d  \partial_{ij,w}^2 v)_T \\
 =& \sum_{T\in {\mathcal  T}_h}(\sum_{i,j=1}^d  {\mathcal  Q}_h (a_{ij}^2-\overline{a _{ij}^2})  \partial_{ij}^2 
u, \sum_{i,j=1}^d  \partial_{ij,w}^2 v)_T+(\sum_{i,j=1}^d {\mathcal  Q}_h   \overline{a _{ij}^2}   \partial_{ij}^2 
u, \sum_{i,j=1}^d  \partial_{ij,w}^2 v)_T \\
 =&\sum_{T\in {\mathcal  T}_h} (\sum_{i,j=1}^d  {\mathcal  Q}_h (a_{ij}^2-\overline{a _{ij}^2})  \partial_{ij}^2 
u, \sum_{i,j=1}^d  \partial_{ij,w}^2 v)_T+(\sum_{i,j=1}^d     \overline{a _{ij}^2}   \partial_{ij,w }^2 
Q_hu, \sum_{i,j=1}^d  \partial_{ij,w}^2 v)_T\\
=&\sum_{T\in {\mathcal  T}_h} (\sum_{i,j=1}^d  {\mathcal  Q}_h (a_{ij}^2-\overline{a _{ij}^2})  \partial_{ij}^2 
u, \sum_{i,j=1}^d  \partial_{ij,w}^2 v)_T+(\sum_{i,j=1}^d      (\overline{a _{ij}^2}- a_{ij}^2)  \partial_{ij,w }^2 
Q_hu, \sum_{i,j=1}^d  \partial_{ij,w}^2 v)_T\\&+(\sum_{i,j=1}^d  a_{ij}^2   \partial_{ij,w }^2 
Q_hu, \sum_{i,j=1}^d  \partial_{ij,w}^2 v)_T\\
=&\sum_{T\in {\mathcal  T}_h}(f, \sum_{i,j=1}^d a_{ij}\partial_{ij,w}^2 v)_T,
\end{split}
\end{equation*}
where $\overline{a _{ij}^2}$ denotes the average of $a _{ij}^2$ on each element, i.e., $\overline{a _{ij}^2}=\frac{1}{|T|}
\int_T a _{ij}^2 dT$. 

This gives
\begin{equation}\label{s3}
    \begin{split}
&\sum_{T\in {\mathcal  T}_h}( \sum_{i,j=1}^da_{ij}^2   \partial_{ij,w }^2 
Q_hu, \sum_{i,j=1}^d  \partial_{ij,w}^2 v)_T
+s(Q_hu, v)\\
=&\sum_{T\in {\mathcal  T}_h}(f, \sum_{i,j=1}^d a_{ij}\partial_{ij,w}^2 v)_T-   (\sum_{i,j=1}^d   {\mathcal  Q}_h (a_{ij}^2-\overline{a _{ij}^2})  \partial_{ij}^2 
u, \sum_{i,j=1}^d  \partial_{ij,w}^2 v)_T\\
&- (\sum_{i,j=1}^d     (\overline{a _{ij}^2}- a_{ij}^2)  \partial_{ij,w }^2 
Q_hu, \sum_{i,j=1}^d  \partial_{ij,w}^2 v)_T
+s(Q_hu, v). \end{split}
\end{equation}   

Subtracting \eqref{s3} from \eqref{fem} gives
\begin{equation}\label{ss}
    \begin{split}
&\sum_{T\in {\mathcal  T}_h}(\sum_{i,j=1}^d a_{ij}^2   \partial_{ij,w }^2 
(Q_hu-u_h), \sum_{i,j=1}^d  \partial_{ij,w}^2 v)_T
+s(Q_hu-u_h, v)\\
=&-\sum_{T\in {\mathcal  T}_h} ( \sum_{i,j=1}^d   {\mathcal  Q}_h (a_{ij}^2-\overline{a _{ij}^2})  \partial_{ij}^2 
u, \sum_{i,j=1}^d  \partial_{ij,w}^2 v)_T\\
&- (\sum_{i,j=1}^d      (\overline{a _{ij}^2}- a_{ij}^2)  \partial_{ij,w }^2 
Q_hu, \sum_{i,j=1}^d  \partial_{ij,w}^2 v)_T
+s(Q_hu, v).
 \end{split}
\end{equation}  
Letting $v=Q_hu-u_h$ in \eqref{ss} and using \eqref{EQ:CommutativeP} yields
\begin{equation}\label{sss}
    \begin{split}
&\3bar Q_hu-u_h\3bar^2\\
=&-\sum_{T\in {\mathcal  T}_h}  (\sum_{i,j=1}^d  {\mathcal  Q}_h ( a_{ij}^2-\overline{a _{ij}^2} ) \partial_{ij}^2 
u, \sum_{i,j=1}^d  \partial_{ij,w}^2 (Q_hu-u_h))_T\\
&-\sum_{T\in {\mathcal  T}_h}\sum_{i,j=1}^d (    (\overline{a _{ij}^2}- a_{ij}^2)  {\mathcal  Q}_h(\partial_{ij }^2 
 u), \sum_{i,j=1}^d  \partial_{ij,w}^2 (Q_hu-u_h))_T
\\&+s(Q_hu, Q_hu-u_h)\\
 =&-\sum_{T\in {\mathcal  T}_h}  (\sum_{i,j=1}^d  {\mathcal  Q}_h   a_{ij}^2  \partial_{ij}^2 
u, \sum_{i,j=1}^d  \partial_{ij,w}^2 (Q_hu-u_h))_T\\
&-\sum_{T\in {\mathcal  T}_h}\sum_{i,j=1}^d (     \ - a_{ij}^2   {\mathcal  Q}_h(\partial_{ij }^2 
 u), \sum_{i,j=1}^d  \partial_{ij,w}^2 (Q_hu-u_h))_T
\\&+s(Q_hu, Q_hu-u_h)\\
=&-\sum_{T\in {\mathcal  T}_h}  (\sum_{i,j=1}^d    a_{ij}  \partial_{ij}^2 
u, \sum_{i,j=1}^d  a_{ij}  \partial_{ij,w}^2 (Q_hu-u_h))_T\\
&+\sum_{T\in {\mathcal  T}_h}\sum_{i,j=1}^d (    a_{ij}{\mathcal  Q}_h(\partial_{ij }^2 
 u), \sum_{i,j=1}^d  a_{ij}\partial_{ij,w}^2 (Q_hu-u_h))_T
\\&+s(Q_hu, Q_hu-u_h)\\
=& \sum_{T\in {\mathcal  T}_h}  (\sum_{i,j=1}^d  a_{ij} ({\mathcal  Q}_h-I)    \partial_{ij}^2 
u, \sum_{i,j=1}^d  a_{ij}  \partial_{ij,w}^2 (Q_hu-u_h))_T 
\\&+s(Q_hu, Q_hu-u_h)\\
=& \sum_{T\in {\mathcal  T}_h}  (\sum_{i,j=1}^d  (a^2_{ij}-\overline{a^2_{ij}})   ({\mathcal  Q}_h-I) \partial_{ij}^2 
u, \sum_{i,j=1}^d     \partial_{ij,w}^2 (Q_hu-u_h))_T  
\\&+s(Q_hu, Q_hu-u_h).
\end{split}
\end{equation}  
Note that $a_{ij}$ is uniformly piecewise continuous in $\Omega$
with respect to $\T_h$. Thus, for any $\varepsilon >0$, there exists
a $h_0>0$ such that $\|a^2_{ij} - \overline{a^2_{ij}}\|_{L^\infty} \leq
\varepsilon$.  
Using Cauchy-Schwarz inequality,  the estimate \eqref{3.3} with $m=1$ and $s=0$, we have
\begin{equation}\label{ee1}
    \begin{split}
    & \Big|\sum_{T\in {\mathcal  T}_h}   (\sum_{i,j=1}^d  (a^2_{ij}-\overline{a^2_{ij}})  ({\mathcal  Q}_h-I)   \partial_{ij}^2 
u, \sum_{i,j=1}^d     \partial_{ij,w}^2 (Q_hu-u_h))_T\Big| \\
\leq & (\sum_{T\in {\mathcal  T}_h}\|\sum_{i,j=1}^d   (a^2_{ij}-\overline{a^2_{ij}}) ({\mathcal  Q}_h-I)  \partial_{ij}^2 
u\|^2_T)^{\frac{1}{2}}\\& \cdot(\sum_{T\in {\mathcal  T}_h} \|\sum_{i,j=1}^d     \partial_{ij,w}^2 (Q_hu-u_h)\|^2_T)^{\frac{1}{2}}  \\
\leq &C \epsilon \|u\|_2 \3bar Q_hu-u_h\3bar.
 \end{split}
\end{equation}

Recall 
$e_h=\{e_0, e_b, \be_g\}=Q_hu-u_h.$
Using Cauchy-Schwarz inequality, and the trace inequality   \eqref{tracein}, and the estimate \eqref{3.1} with $s=0,1$ and $m=k$, we obtain
\begin{equation}\label{ee2}
\begin{split}
&|s(Q_hu,e_h)| \\=& |\sum_{T\in {\mathcal  T}_h} h_T^{-3} \langle Q_0u-Q_bu,
e_0-e_b\rangle_\pT\\&+ h_T^{-1} \langle \nabla Q_0u -\bQ_g(\nabla u), \nabla e_0 -\be_g \rangle_\pT |\\
\leq &  (\sum_{T\in {\mathcal  T}_h}h_T^{-3}\|Q_0u-Q_bu\|^2_{\partial
T} )^{\frac{1}{2}} (\sum_{T\in {\mathcal  T}_h} h_T^{-3}
\|e_0-e_b\|^2_{\partial
T} )^{\frac{1}{2}}\\
&+   (\sum_{T\in {\mathcal  T}_h} h_T^{-1} \|\nabla Q_0u -\bQ_g(\nabla u)\|^2_{\partial T}\Big)^{\frac{1}{2}} (\sum_{T\in {\mathcal  T}_h}
h_T^{-1} \|\nabla e_0 -\be_g\|^2_{\partial
T} )^{\frac{1}{2}}\\
 \leq&   (\sum_{T\in {\mathcal  T}_h}h_T^{-4}\|Q_0u- u\|^2_{
T}+h_T^{-2}\|Q_0u- u\|^2_{
1, T} )^{\frac{1}{2}}\3bar e_h\3bar \\&+  (\sum_{T\in {\mathcal  T}_h} h_T^{-2} \|\nabla Q_0u -  \nabla u \|^2  +\|\nabla Q_0u -  \nabla u \|^2_{1, T} )^{\frac{1}{2}}  \3bar e_h\3bar\\
\leq & Ch^{k-1}\|u\|_{k+1}\3bar e_h\3bar.
\end{split}
\end{equation}

Substituting \eqref{ee1} and \eqref{ee2} into \eqref{sss} gives
$$
\3bar Q_hu-u_h\3bar\leq C h^{k-1}\|u\|_{k+1}.
$$

This completes the proof of the theorem.
\end{proof}

\begin{theorem}
Let $u$ and $u_h$ be the solution of model problem \eqref{model_problem} and the least square WG scheme \eqref{fem}, respectively. There exisits a constant $C$ such that 
\begin{equation}
    \3bar u-u_h\3bar \leq Ch^{k-1}\|u\|_{k+1}.
\end{equation}
\end{theorem}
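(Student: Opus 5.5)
The plan is the triangle inequality together with the previous theorem. Interpret $u$ as the weak function $\{u, u|_{\partial T}, \nabla u|_{\partial T}\}$, so that $\3bar u\3bar$ makes sense through \eqref{2.4}. Since $\3bar\cdot\3bar$ is a seminorm on $W(T)$-valued functions built from a bilinear form,
$$
\3bar u-u_h\3bar \le \3bar u-Q_hu\3bar + \3bar Q_hu-u_h\3bar .
$$
The second term is bounded by $Ch^{k-1}\|u\|_{k+1}$ by \eqref{est1}. So it only remains to show $\3bar u-Q_hu\3bar\le Ch^{k-1}\|u\|_{k+1}$.

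For the stabilizer part of $\3bar u-Q_hu\3bar$, note that $u$ has matching traces: $u_0-u_b=0$ and $\nabla u_0-\bu_g=0$. Hence
$$
s(u-Q_hu,u-Q_hu)=s(Q_hu,Q_hu).
$$
This quantity was already estimated in \eqref{ee2}. By the trace inequality \eqref{tracein} and \eqref{3.1} with $m=k$, $s=0,1$ (applied to $u$ and to $\nabla u$), we get
$$
\sum_T h_T^{-3}\|Q_0u-Q_bu\|_{\partial T}^2+h_T^{-1}\|\nabla Q_0u-\bQ_g\nabla u\|_{\partial T}^2\le Ch^{2(k-1)}\|u\|_{k+1}^2 .
$$
Here we use that $\|Q_0u-Q_bu\|_{\partial T}\le \|Q_0u-u\|_{\partial T}$, since $Q_b$ is an $L^2$ projection and $Q_0u|_e$ need not lie in $P_k(e)$ on curved data. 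More carefully, write $Q_0u-Q_bu=Q_b(Q_0u-u)$, which holds because $Q_0u|_e\in P_k(e)$. The same argument applies to the gradient term, since $\nabla Q_0u|_e\in[P_{k-1}(e)]^d$.

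For the $a(\cdot,\cdot)$ part, \eqref{2.4new} applied to $u$ with exact traces shows that $\partial^2_{ij,w}u=\mathcal Q_h\partial^2_{ij}u$, the $L^2$ projection onto $P_{k-2}(T)$. The commutative property \eqref{EQ:CommutativeP} gives $\partial^2_{ij,w}Q_hu=\mathcal Q_h\partial^2_{ij}u$ as well. Hence $\partial^2_{ij,w}(u-Q_hu)=0$ and the $a$-part vanishes identically.

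The main obstacle is interpretive: one must make sure that the weak Hessian of $u$ means the discrete one. If instead the norm is intended with the true Hessian $\partial^2_{ij}u$ for the continuous function, then the $a$-part becomes $\|\sum a_{ij}(I-\mathcal Q_h)\partial^2_{ij}u\|$. In that case I would bound it using $a_{ij}\in L^\infty$ and \eqref{3.3} with $m=k$, $s=0$, which gives $Ch^{k-1}\|u\|_{k+1}$. Either way the bound follows, and combining the pieces gives the theorem.
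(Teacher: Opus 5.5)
Your proposal is correct and follows essentially the same route as the paper. You split through $Q_hu$, invoke \eqref{est1}, and control $s(u-Q_hu,u-Q_hu)$ by the trace inequality \eqref{tracein} together with the projection estimates \eqref{3.1}. The only difference is in the $a$-part: the paper uses your second (true-Hessian) reading and bounds it by $\sum_T\|\sum_{i,j}a_{ij}(\partial^2_{ij}u-\mathcal{Q}_h\partial^2_{ij}u)\|_T^2$ via \eqref{EQ:CommutativeP} and \eqref{3.3}, whereas your remark that it vanishes identically under the discrete reading is a valid sharpening (and your unsquared triangle inequality is cleaner than the paper's squared version, which omits a factor of $2$).
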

\begin{proof}
By triangle inequality, Cauchy-Schwarz inequality,  \eqref{EQ:CommutativeP}, the error estimate \eqref{est1}, the estimate \eqref{3.3} with $s=0$ and $m=k$,   we have 
    \begin{equation}\label{q}
        \begin{split}
          &  \3bar u-u_h\3bar^2 \\ \leq   &  \3bar u-Q_hu\3bar^2+ \3bar  Q_hu-u_h\3bar^2 \\
            \leq & \sum_{T\in {\mathcal  T}_h}      \| \sum_{i,j=1}^da_{ij}\partial_{ij,w}^2
(u-Q_hu) \|^2_T +s(u-Q_hu, u-Q_hu)\\&+\3bar  Q_hu-u_h\3bar^2\\
\leq & \sum_{T\in {\mathcal  T}_h}      \| \sum_{i,j=1}^d a_{ij}(\partial_{ij}^2
 u-{\mathcal  Q}_h \partial_{ij}^2 u) \|^2_T +s(u-Q_hu, u-Q_hu)\\&+ \3bar  Q_hu-u_h\3bar^2\\
 \leq & Ch^{2k-2}\|u\|_{k+1}^2+s(u-Q_hu, u-Q_hu).
        \end{split}
    \end{equation}

  Using  the trace inequality \eqref{tracein}, the estimate \eqref{3.1} with $s=0, 1$ and $m=k$, we have
\begin{equation}\label{111}
\begin{split}
& s(u-Q_hu,u-Q_hu)  \\=& \sum_{T\in {\mathcal  T}_h} h_T^{-3} \| Q_bu-Q_0u 
\|^2_\pT+ h_T^{-1} \|   \bQ_g(\nabla u)-\nabla Q_0u\|^2_\pT  \\
 \leq&    \sum_{T\in {\mathcal  T}_h}h_T^{-4}\|Q_0u- u\|^2_{
T}+h_T^{-2}\|Q_0u- u\|^2_{
1, T}   \\&+   h_T^{-2} \|\nabla Q_0u -  \nabla u \|^2  +\|\nabla Q_0u -  \nabla u \|^2_{1, T}  \\
\leq & Ch^{2(k-1)}\|u\|^2_{k+1}.
\end{split}
\end{equation}

Substituting \eqref{111} into \eqref{q} completes the proof   of the theorem. 
\end{proof}

\section{Numerical Experiments}

In the first numerical test,  we solve the elliptic equation in non-divergence form \eqref{model_problem}
   on the unit square domain $\Omega=(0,1)\times(0,1)$, where 
\a{ a_{ij} = 1+\delta_{ij}, \quad i,j=1,2. } 
We choose an $f$ in  \eqref{model_problem} so that the exact solution is
\an{\label{s2}
   u = (x-x^3) (y^2-y^3) .  }  
 We compute the solution \eqref{s2} on the triangular grids shown in Figure \ref{f-2}, and on the non-convex polygonal 
  grids shown in Figure \ref{f-5}, by 
  the weak Galerkin $P_k$-$P_k$-$P_{k-1}^2$/$P_k^{2\times 2}$ finite elements, $k=2,3,4$ and $5$.
The results are listed in Tables \ref{t1}-\ref{t4}, 
    where we can see that the optimal orders of convergence 
  are achieved in all cases.  
  In these tables, $G_i$ denotes the $i$-th grid.  For example, $G_1$ in Table \ref{t1} is shown in
    Figure \ref{f-2} or Figure \ref{f-5} .
    
\begin{figure}[H]
\begin{center}\setlength\unitlength{2.4pt}\centering 
 \begin{picture}(140,45)(0,0) \put(0,41){$G_1:$}  \put(50,41){$G_2:$} \put(100,41){$G_3:$} 
  
\def\sq{\begin{picture}(40,40)(0,0) \put(0,40){\line(1,-1){40}}
  \multiput(0,0)(40,0){2}{\line(0,1){40}}\multiput(0,0)(0,40){2}{\line(1,0){40}} \end{picture} }
  
\put(0,0){\begin{picture}(40,40)(0,0)
  \multiput(0,0)(0,40){1}{\multiput(0,0)(40,0){1}{\sq}} 
  \end{picture} }
  
\put(50,0){\setlength\unitlength{1.2pt}\begin{picture}(40,40)(0,0)
  \multiput(0,0)(0,40){2}{\multiput(0,0)(40,0){2}{\sq}} 
  \end{picture} } 
\put(100,0){\setlength\unitlength{0.6pt}\begin{picture}(40,40)(0,0)
  \multiput(0,0)(0,40){4}{\multiput(0,0)(40,0){4}{\sq}} 
  \end{picture} } 
\end{picture}\end{center}
\caption{The triangular  grids for computing  \eqref{s2} in Tables \ref{t1}--\ref{t4}. }
\label{f-2}
\end{figure}

\begin{figure}[H]
\begin{center}\setlength\unitlength{2.4pt}\centering 
 \begin{picture}(140,45)(0,0) \put(0,41){$G_1:$}  \put(50,41){$G_2:$} \put(100,41){$G_3:$} 
  
\def\sq{\begin{picture}(40,40)(0,0) \put(0,0){\line(2,3){13.33}}  \put(40,40){\line(-2,-3){13.33}} \put(13.33,20){\line(1,0){13.33}}
  \multiput(0,0)(40,0){2}{\line(0,1){40}}\multiput(0,0)(0,40){2}{\line(1,0){40}} \end{picture} }
  
\put(0,0){\begin{picture}(40,40)(0,0)
  \multiput(0,0)(0,40){1}{\multiput(0,0)(40,0){1}{\sq}} 
  \end{picture} }
  
\put(50,0){\setlength\unitlength{1.2pt}\begin{picture}(40,40)(0,0)
  \multiput(0,0)(0,40){2}{\multiput(0,0)(40,0){2}{\sq}} 
  \end{picture} } 

\put(100,0){\setlength\unitlength{0.6pt}\begin{picture}(40,40)(0,0)
  \multiput(0,0)(0,40){4}{\multiput(0,0)(40,0){4}{\sq}} 
  \end{picture} } 
\end{picture}\end{center}
\caption{The non-convex polygonal grids for computing  \eqref{s2} in Tables \ref{t1}--\ref{t4}. }
\label{f-5}
\end{figure}

\begin{table}[H]
  \centering  \renewcommand{\arraystretch}{1.1}
  \caption{Error profile by the $P_2$ WG element  for computing \eqref{s2}. }
  \label{t1}
\begin{tabular}{c|cc|cc}
\hline
Grid $G_i$ & \quad $\| u-u_h\|_{0}$ & $O(h^r)$ & \  $\|D^2_w(u-u_h)\|_{0}$& $O(h^r)$   \\ \hline
    &  \multicolumn{4}{c}{ On the triangular meshes shown in Figure \ref{f-2}. }   \\
\hline  
 2&    0.287E-02 &  2.3&    0.831E+00 &  0.1\\
 3&    0.330E-03 &  3.1&    0.355E+00 &  1.2\\
 4&    0.686E-04 &  2.3&    0.184E+00 &  0.9\\
 5&    0.313E-04 &  1.1&    0.927E-01 &  1.0\\
\hline 
    &  \multicolumn{4}{c}{ On the  polygonal meshes shown in Figure \ref{f-5}. }   \\
\hline   
 2&    0.108E-01 &  2.9&    0.172E+01 &  0.7\\
 3&    0.155E-02 &  2.8&    0.808E+00 &  1.1\\
 4&    0.201E-03 &  2.9&    0.424E+00 &  0.9\\
 5&    0.291E-04 &  2.8&    0.216E+00 &  1.0\\
\hline 
    \end{tabular}%
\end{table}%

\begin{table}[H]
  \centering  \renewcommand{\arraystretch}{1.1}
  \caption{Error profile by the $P_3$ WG element  for computing \eqref{s2}. }
  \label{t2}
\begin{tabular}{c|cc|cc}
\hline
Grid $G_i$ & \quad $\| u-u_h\|_{0}$ & $O(h^r)$ & \  $\|D^2_w(u-u_h)\|_{0}$& $O(h^r)$   \\ \hline
    &  \multicolumn{4}{c}{ On the triangular meshes shown in Figure \ref{f-2}. }   \\ 
\hline  
 2&    0.477E-03 &  3.8&    0.306E+00 &  1.8\\
 3&    0.305E-04 &  4.0&    0.768E-01 &  2.0\\
 4&    0.192E-05 &  4.0&    0.181E-01 &  2.1\\
 5&    0.125E-06 &  3.9&    0.436E-02 &  2.1\\
\hline 
    &  \multicolumn{4}{c}{ On the polygonal meshes shown in Figure \ref{f-5}. }   \\
\hline   
 2&    0.650E-02 &  3.5&    0.325E+01 &  1.6\\
 3&    0.423E-03 &  3.9&    0.858E+00 &  1.9\\
 4&    0.265E-04 &  4.0&    0.216E+00 &  2.0\\
 5&    0.164E-05 &  4.0&    0.541E-01 &  2.0\\
\hline 
    \end{tabular}%
\end{table}%

\begin{table}[H]
  \centering  \renewcommand{\arraystretch}{1.1}
  \caption{Error profile by the $P_4$ WG element  for computing \eqref{s2}. }
  \label{t3}
\begin{tabular}{c|cc|cc}
\hline
Grid $G_i$ & \quad $\| u-u_h\|_{0}$ & $O(h^r)$ & \  $\|D^2_w(u-u_h)\|_{0}$& $O(h^r)$   \\ \hline
    &  \multicolumn{4}{c}{ On the triangular meshes shown in Figure \ref{f-2}. }   \\
\hline   
 2&    0.752E-04 &  4.9&    0.969E-01 &  2.7\\
 3&    0.241E-05 &  5.0&    0.103E-01 &  3.2\\
 4&    0.828E-07 &  4.9&    0.125E-02 &  3.0\\
 5&    0.258E-08 &  5.0&    0.153E-03 &  3.0\\
\hline 
    &  \multicolumn{4}{c}{ On the   polygonal meshes shown in Figure \ref{f-5}. }   \\
\hline    
 2&    0.110E-02 &  5.0&    0.102E+01 &  2.9\\
 3&    0.342E-04 &  5.0&    0.129E+00 &  3.0\\
 4&    0.106E-05 &  5.0&    0.161E-01 &  3.0\\
 5&    0.715E-07 &  ---&    0.202E-02 &  3.0\\
\hline 
    \end{tabular}%
\end{table}%

\begin{table}[H]
  \centering  \renewcommand{\arraystretch}{1.1}
  \caption{Error profile by the $P_5$ WG element  for computing \eqref{s2}. }
  \label{t4}
\begin{tabular}{c|cc|cc}
\hline
Grid $G_i$ & \quad $\| u-u_h\|_{0}$ & $O(h^r)$ & \  $\|D^2_w(u-u_h)\|_{0}$& $O(h^r)$   \\ \hline
    &  \multicolumn{4}{c}{ On the triangular meshes shown in Figure \ref{f-2}. }   \\
\hline   
 1&    0.117E-02 & --- &    0.436E+00 & --- \\
 2&    0.980E-05 &  6.9&    0.158E-01 &  4.8\\
 3&    0.136E-06 &  6.2&    0.905E-03 &  4.1\\
 4&    0.194E-08 &  6.1&    0.538E-04 &  4.1\\
\hline 
    &  \multicolumn{4}{c}{ On the   polygonal meshes shown in Figure \ref{f-5}. }   \\
\hline    
 1&    0.102E-01 & --- &    0.634E+01 & --- \\
 2&    0.155E-03 &  6.0&    0.390E+00 &  4.0\\
 3&    0.241E-05 &  6.0&    0.242E-01 &  4.0\\
 4&    0.877E-06 &  ---&    0.151E-02 &  4.0\\
\hline 
    \end{tabular}%
\end{table}%

The second numerical example is from \cite{smears,pdwg}.
We solve the elliptic equation in non-divergence form \eqref{model_problem}
   on a square domain $\Omega=(-1,1)\times(-1,1)$, where 
\a{ \p{ a_{11} & a_{12} \\ a_{21} & a_{22}} =\frac{16} 9 \p{ 2  &  {xy}/{|xy|} \\   {xy}/{|xy|} & 2}. } 
We choose an $f$ in  \eqref{model_problem} so that the exact solution is
\an{\label{s5}
   u = xy(1-\t{e}^{1-|x|} )(1-\t{e}^{1-|y|} ) .  }  
 We compute the solution \eqref{s5} on the triangular grids shown in Figure \ref{f-2}, and on the non-convex polygonal 
  grids shown in Figure \ref{f-5}, by 
  the weak Galerkin $P_k$-$P_k$-$P_{k-1}^2$/$P_k^{2\times 2}$ finite elements, $k=2,3,4$ and $5$.
The results are listed in Tables \ref{t5}-\ref{t8}, 
    where we can see that the optimal orders of convergence 
  are achieved in all cases.

\begin{table}[H]
  \centering  \renewcommand{\arraystretch}{1.1}
  \caption{Error profile by the $P_2 $ WG element  for computing \eqref{s5}. }
  \label{t5}
\begin{tabular}{c|cc|cc}
\hline
Grid $G_i$ & \quad $\| u-u_h\|_{0}$ & $O(h^r)$ & \  $\|D^2_w(u-u_h)\|_{0}$& $O(h^r)$   \\ \hline
    &  \multicolumn{4}{c}{ On the triangular meshes shown in Figure \ref{f-2}. }   \\
\hline   
 2&    0.523E-01 &  0.0&    0.337E+01 &  0.0\\
 3&    0.127E-01 &  2.0&    0.221E+01 &  0.6\\
 4&    0.157E-02 &  3.0&    0.106E+01 &  1.1\\
 5&    0.420E-03 &  1.9&    0.578E+00 &  0.9\\
\hline 
    &  \multicolumn{4}{c}{ On the   polygonal meshes shown in Figure \ref{f-5}. }   \\
\hline    
 2&    0.143E+00 &  0.0&    0.614E+01 &  0.0\\
 3&    0.326E-01 &  2.1&    0.407E+01 &  0.6\\
 4&    0.419E-02 &  3.0&    0.208E+01 &  1.0\\
 5&    0.850E-03 &  2.3&    0.107E+01 &  1.0\\
\hline 
    \end{tabular}%
\end{table}%

\begin{table}[H]
  \centering  \renewcommand{\arraystretch}{1.1}
  \caption{Error profile by the $P_3$ WG element  for computing \eqref{s5}. }
  \label{t6}
\begin{tabular}{c|cc|cc}
\hline
Grid $G_i$ & \quad $\| u-u_h\|_{0}$ & $O(h^r)$ & \  $\|D^2_w(u-u_h)\|_{0}$& $O(h^r)$   \\ \hline
    &  \multicolumn{4}{c}{ On the triangular meshes shown in Figure \ref{f-2}. }   \\
\hline   
 2&    0.428E-01 &  0.0&    0.346E+01 &  0.0\\
 3&    0.507E-02 &  3.1&    0.107E+01 &  1.7\\
 4&    0.399E-03 &  3.7&    0.260E+00 &  2.0\\
 5&    0.392E-04 &  3.3&    0.597E-01 &  2.1\\
\hline 
    &  \multicolumn{4}{c}{ On the   polygonal meshes shown in Figure \ref{f-5}. }   \\
\hline    
 2&    0.210E+00 &  0.0&    0.241E+02 &  0.0\\
 3&    0.191E-01 &  3.5&    0.690E+01 &  1.8\\
 4&    0.130E-02 &  3.9&    0.178E+01 &  2.0\\
 5&    0.931E-04 &  3.8&    0.448E+00 &  2.0\\
\hline 
    \end{tabular}%
\end{table}%

\begin{table}[H]
  \centering  \renewcommand{\arraystretch}{1.1}
  \caption{Error profile by the $P_4$ WG element  for computing \eqref{s5}. }
  \label{t7}
\begin{tabular}{c|cc|cc}
\hline
Grid $G_i$ & \quad $\| u-u_h\|_{0}$ & $O(h^r)$ & \  $\|D^2_w(u-u_h)\|_{0}$& $O(h^r)$   \\ \hline
    &  \multicolumn{4}{c}{ On the triangular meshes shown in Figure \ref{f-2}. }   \\
\hline   
 2&    0.126E-01 &  0.0&    0.234E+01 &  0.0\\
 3&    0.471E-03 &  4.7&    0.245E+00 &  3.3\\
 4&    0.181E-04 &  4.7&    0.296E-01 &  3.0\\ 
\hline 
    &  \multicolumn{4}{c}{ On the   polygonal meshes shown in Figure \ref{f-5}. }   \\
\hline    
 2&    0.100E+00 &  0.0&    0.356E+02 &  0.0\\
 3&    0.340E-02 &  4.9&    0.495E+01 &  2.8\\
 4&    0.105E-03 &  5.0&    0.631E+00 &  3.0\\
\hline 
    \end{tabular}%
\end{table}%

\begin{table}[H]
  \centering  \renewcommand{\arraystretch}{1.1}
  \caption{Error profile by the $P_5$ WG element  for computing \eqref{s5}. }
  \label{t8}
\begin{tabular}{c|cc|cc}
\hline
Grid $G_i$ & \quad $\| u-u_h\|_{0}$ & $O(h^r)$ & \  $\|D^2_w(u-u_h)\|_{0}$& $O(h^r)$   \\ \hline
    &  \multicolumn{4}{c}{ On the triangular meshes shown in Figure \ref{f-2}. }   \\
\hline   
 2&    0.209E-02 &  0.0&    0.617E+00 &  0.0\\
 3&    0.526E-04 &  5.3&    0.396E-01 &  4.0\\
 4&    0.129E-05 &  5.3&    0.227E-02 &  4.1\\
\hline 
    &  \multicolumn{4}{c}{ On the   polygonal meshes shown in Figure \ref{f-5}. }   \\
\hline    
 2&    0.181E-01 &  0.0&    0.112E+02 &  0.0\\
 3&    0.329E-03 &  5.8&    0.769E+00 &  3.9\\
 4&    0.559E-04 &  ---&    0.494E-01 &  4.0\\
\hline 
    \end{tabular}%
\end{table}%

\end{document}